\newtheorem{thm}{Theorem}
\newtheorem{cor}[thm]{Corollary}
\begin{document}

\begin{titlepage}

\title[Moment generating functions for lacunary sums]{Moment generating functions and moderate deviation principles for lacunary sums}

 \bigskip
\author{Christoph Aistleitner}
\author{Lorenz Fr\"uhwirth}
\author{Manuel Hauke}
\author{Maryna Manskova}

\subjclass[2020]{Primary 42A55; Secondary 11K06, 42A70, 60F05, 60F10}
\keywords{Lacunary trigonometric sums, Hadamard gap condition, moment generating function, moderate deviation principle}

\begin{abstract} 
In a recent paper, Aistleitner, Gantert, Kabluchko, Prochno and Ramanan studied large deviation principles (LDPs) for lacunary trigonometric sums $\sum_{k=1}^N \cos(2 \pi n_k x)$, where the sequence $(n_k)_{k \geq 1}$ satisfies the Hadamard gap condition $n_{k+1} / n_k \geq q > 1$ for $k \geq 1$. A crucial ingredient in their work were asymptotic estimates for the moment generating function (MGF) of such sums, which turned out to depend on the fine arithmetic structure of the sequence $(n_k)_{k \geq 1}$ in an intricate way. In the present paper we carry out a detailed study of the MGF for lacunary trigonometric sums (without any structural assumptions on the underlying sequence, other than lacunarity), and we determine the sharp threshold where arithmetic effects start to play a role. As an application, we prove moderate deviation principles for lacunary trigonometric sums, and show that the tail probabilities are in accordance with Gaussian behavior throughout the whole range between the central limit theorem and the LDP regime.
\end{abstract}
	
\maketitle
\noindent
\address{Christoph Aistleitner: Graz University of Technology, Institute of Analysis and Number Theory, Steyrergasse 30, 8010 Graz, Austria}\\
\email{Mail: aistleitner@math.tugraz.at}\\

\noindent
\address{Lorenz Frühwirth: University of Passau, Faculty of Computer Science and Mathematics, Dr.-Hans-Kapfinger-Stra{\ss}e 30, 94032 Passau, Germany}\\
\email{Mail: lorenz.fruehwirth@uni-passau.de}\\

\noindent
\address{Manuel Hauke: Norwegian University of Science and Technology, Department of Mathematical Sciences, Sentralbygg 2 Gl{\o}shaugen, Trondheim, Norway}\\
\email{Mail: manuel.hauke@gmail.com}\\

\noindent
\address{Maryna Manskova: Graz University of Technology, Institute of Analysis and Number Theory, Steyrergasse 30, 8010 Graz, Austria}\\
\email{Mail: maryna.manskova@tugraz.at}\\

\section*{Acknowledgments}

CA is supported by the Austrian Science Fund (FWF), projects 10.55776/I4945, 10.55776/I5554, 10.55776/P34763 and 10.55776/P35322. MM is supported by the Austrian Science Fund (FWF), project DOC-183. 

\end{titlepage}

\section{Introduction and main result}

Lacunary trigonometric sums have a long and fascinating history. They are well-known for exhibiting properties which are fundamentally different from those of general trigonometric sums, and rather coincide in many cases with the behavior which is typically also observed for sums of independent random variables. Early examples of this phenomenon include Weierstra{\ss}' construction of a continuous, nowhere differentiable function, and Fatou's work on the almost everywhere convergence of lacunary trigonometric series, a topic which was brought to a culmination point by Kolmogorov \cite{kolm}. Among the most remarkable early papers on lacunary sums, from a probabilistic perspective, are Salem and Zygmund's \cite{salz} central limit theorem for lacunary trigonometric sums, Erd\H os and G\'al's \cite{erdg} law of the iterated logarithm, and Kac' work \cite{kac} on interference phenomena.\\

The most classical lacunary growth condition, called the Hadamard gap condition, requests that there exists a constant $q>1$ such that 
\begin{equation} \label{had}
\frac{n_{k+1}}{n_k}\geq q, \qquad k \geq 1,  
\end{equation}
and for such a sequence of frequencies one studies the sums
$$
\sum_{k=1}^N \cos(2 \pi n_k x) \qquad \text{and} \qquad \sum_{k=1}^N \sin(2 \pi n_k x)
$$
(which behave very similarly in most regards, so that often the results and proofs are only stated for the cosine sums). Among possible generalizations, all of which have been studied extensively, are: the introduction of a coefficient sequence $(c_k)_{k \geq 1}$, so that the sum becomes $\sum_{k=1}^N c_k \cos (2 \pi n_k x)$, see for example Weiss \cite{weiss} and Fr\"uhwirth and Hauke \cite{frueh}; replacing the Hadamard gap condition by a weaker growth condition, see for example Erd\H{o}s \cite{erd}, Takahashi \cite{taka}, Berkes \cite{berkes}, and Bobkov and G\"otze \cite{bob}; replacing $\cos(2 \pi x)$ by a more general 1-periodic function $f(x)$, see for example Kac \cite{kac}, Gapo\v{s}kin \cite{gapo}, and Aistleitner and Berkes \cite{ab}; replacing the trigonometric system by another orthogonal system such as the Walsh system, see for example Levizov \cite{levizov} and Berkes and Philipp \cite{bp}; taking a perspective from ergodic theory, see for example Conze and Le Borgne \cite{clb} and Conze, Le Borgne and Roger \cite{clbr}; and interpreting lacunary sums in the wider framework of the ``subsequence principle'' of probability theory and functional analysis, see Aldous \cite{ald1,ald2} and Berkes and P\'{e}ter \cite{bepe}. For more general background on lacunary sums and related topics, we refer to the recent survey article \cite{abt}.\\

It is notable that the central limit theorem and the law of the iterated logarithm for lacunary trigonometric sums hold in a universal form: if $(n_k)_{k \geq 1}$ satisfies \eqref{had}, then for all $t \in \mathbb{R}$
$$
\mathbb{P} \left( x \in (0,1):~ \frac{1}{\sqrt{N}} \sum_{k=1}^N \sqrt{2} \cos(2 \pi n_k x) \leq t \right) \to \Phi(t) \qquad \text{as $N \to \infty$,}
$$
where $\mathbb{P}$ is the Lebesgue measure and $\Phi$ is the distribution function of the standard normal distribution. Further,
$$
\limsup_{N \to \infty} \frac{\sum_{k=1}^N \sqrt{2} \cos(2 \pi n_k x)}{\sqrt{2 N \log \log N}} = 1 \qquad \text{a.e.}; 
$$
the normalization factor $\sqrt{2}$, which appears in both formulas, comes from the standard deviation of the identically distributed (but not independent) random variables $\cos(2\pi n_k x),~k \geq 1,$ which is
$$
\left( \int_0^1 \cos(2 \pi n_k x)^2 ~\mathrm{d}x \right)^{1/2} = \frac{1}{\sqrt{2}}.
$$
We emphasize again that neither formula depends in any way on specific (for example, number-theoretic) properties of the sequence $(n_k)_{k \geq 1}$, other than the assumption that it satisfies the growth condition \eqref{had}. This ``universal'' behavior at the scales of central limit theorem and law of the iterated logarithm changes dramatically when $\cos(2 \pi x)$ is replaced by a more general 1-periodic function $f(x)$, and many papers have been devoted to the intriguing effect that arithmetic properties of $(n_k)_{k \geq 1}$ have on the probabilistic behavior of $\sum f(n_k x)$ for lacunary $(n_k)_{k \geq 1}$; however, this is not the direction which we will pursue here, since throughout this paper we will stick to the case of pure trigonometric sums.\\

A very interesting effect was observed in a recent paper of Aistleitner, Gantert, Kabluchko, Prochno and Ramanan \cite{agkpr}. They studied so-called large deviation principles (LDPs) for lacunary trigonometric sums, which are essentially asymptotic estimates for the ``large deviation'' probabilities 
$$
\mathbb{P} \left( x \in (0,1):~ \frac{1}{N} \sum_{k=1}^N \sqrt{2} \cos(2 \pi n_k x) > t \right)
$$
(note that the scaling factor here is $1/N$, in contrast to the $1/\sqrt{N}$ of the central limit theorem). A comparison with the tail probabilities of the normal distribution suggests that these probabilities should be of order roughly $e^{-N t^2/2}$, so it makes sense to consider the ``rate function''
$$
I(t) := -  \lim_{N \to \infty} \frac{1}{N} \log \left(\mathbb{P} \left( x \in (0,1):~ \frac{1}{N} \sum_{k=1}^N \cos(2 \pi n_k x) > t \right)\right),
$$
provided such a limit actually exists. By the G\"artner--Ellis theorem, the calculation of such rate functions is essentially reduced to the calculation of the moment generating function (MGF), or more precisely, the calculation of the limit as $N \to \infty$ of
$$
\Lambda_N (\lambda) := \frac{1}{N} \log \left( \mathbb{E} \left( \exp \left( \lambda \sum_{k=1}^N \sqrt{2} \cos(2 \pi n_k x) \right) \right) \right),
$$
provided this limit exists (throughout this paper, expected values are always understood to be taken with respect to the Lebesgue measure on $(0,1)$). Now, if the system $(\cos(2 \pi n_k x))_{k \geq 1}$ were to exhibit truly independent behavior, then one could calculate
\begin{align*}
\mathbb{E} \left( \exp \left( \lambda \sum_{k=1}^N \sqrt{2} \cos(2 \pi n_k x) \right) \right) = &\prod_{k=1}^N \mathbb{E} \left( \exp \left( \lambda \sqrt{2} \cos(2 \pi n_k x) \right) \right) \\
= & \left( \mathbb{E} \left( \exp \left( \lambda \sqrt{2} \cos(2 \pi x) \right) \right) \right)^N,
\end{align*}
and accordingly one could expect the limit $\Lambda(\lambda)$ of $\Lambda_N(\lambda)$ to be
\begin{align}
\log \left( \int_0^1 e^{\lambda \sqrt{2} \cos(2 \pi x)} ~\mathrm{d}x \right) = & \log \left(I_0 \left(\sqrt{2} \lambda \right)\right) \nonumber\\
= & \log \left( \sum_{m=0}^\infty \frac{\lambda^{2m}}{2^m (m!)^2} \right) \nonumber\\
= & \frac{\lambda^2}{2} - \frac{\lambda^4}{16} + \frac{\lambda^6}{72} - +\dots, \label{series}
\end{align}
where $I_0$ denotes a modified Bessel function of the first kind. Indeed, in \cite{agkpr} it is proved that under the stronger gap condition $\frac{n_{k+1}}{n_k} \to \infty$ as $k \to \infty$, the limiting function $\Lambda(\lambda)$ exists and indeed equals $I_0 \left(\sqrt{2} \lambda \right)$, in accordance with true independent behavior. However, for other sequences $(n_k)_{k \geq 1}$ (of actual exponential growth, and not super-exponential growth), the MGF of $\sum_{k=1}^N \sqrt{2} \cos(2 \pi n_k x)$ could only be dealt with in \cite{agkpr} with the necessary precision under strong structural assumptions on $(n_k)_{k \geq 1}$, such as assuming the very special (``stationary'') situation $n_k = a^k$ for $k\geq 1$, for some integer $a \geq 2$.\\

In this paper, we study $\Lambda_N (\lambda)$ as a function of $\lambda \in [-1,1]$ for general lacunary sequences that satisfy \eqref{had}. We prove (Theorem \ref{th1}) that $\Lambda_N(\lambda) = \frac{\lambda^2}{2} + \mathcal{O}(\lambda^3)$ as $N \to \infty$ throughout the range $\lambda \in [-1,1]$ for all lacunary sequences $(n_k)_{k \geq 1}$. Thus the quadratic term in the expansion of $\Lambda_N(\lambda)$ is always (asymptotically) in accordance with the random model, and also in accordance with Gaussian behavior. However, the cubic term does not need to be in agreement with the random model, and (as the examples after the statement of the theorem will show) our error term $\mathcal{O}(\lambda^3)$ is optimal as far as general lacunary sequences are concerned. 

\begin{thm} 
\label{th1}
 Let $(n_k)_{k \geq 1}$ be a sequence of positive integers satisfying the Hadamard gap condition $\frac{n_{k+1}}{n_k} \geq q$, $k \geq 1$, for some constant $q > 1$. Then for all $\lambda \in [-1,1]$ we have
 $$
 \int_0^1 \exp \left( \lambda \sum_{k=1}^N \sqrt{2}\cos(2 \pi n_k x) \right) \mathrm{d}x = \exp \left( \frac{\lambda^2N}{2} + \mathcal{O} \left(\lambda^{3} N \right) \right)
 $$
 as $N \to \infty$, where the implied constant depends only on $q$.
\end{thm}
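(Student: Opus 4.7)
The plan is to use the Jacobi--Anger expansion $e^{z\cos\theta} = \sum_{m \in \mathbb{Z}} I_{|m|}(z) e^{im\theta}$ to convert the integral into a sum over diophantine relations among the $n_k$, isolate the main diagonal term, and control the remaining contribution inductively via the Hadamard gap condition. Factor-by-factor expansion and integration give
$$
Z_N(\lambda) \;:=\; \int_0^1 \prod_{k=1}^N e^{\lambda \sqrt{2} \cos(2\pi n_k x)}\, dx \;=\; \sum_{\mathbf{m} \in \mathbb{Z}^N:\; \sum_k m_k n_k = 0} \prod_{k=1}^N I_{|m_k|}(\sqrt{2} \lambda).
$$
The diagonal $\mathbf{m} = \mathbf{0}$ contributes $I_0(\sqrt{2} \lambda)^N$, and from $\log I_0(\sqrt{2} \lambda) = \lambda^2/2 + O(\lambda^4)$ one obtains $I_0(\sqrt{2}\lambda)^N = \exp(\lambda^2 N/2 + O(\lambda^4 N))$, which is already of the target form since $\lambda^4 \leq |\lambda|^3$ on $[-1,1]$. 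The task reduces to showing that the full quantity $Z_N$ differs from its diagonal part by a multiplicative factor $\exp(O(\lambda^3 N))$ with constants depending only on $q$.

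Since a purely additive bound on the off-diagonal contribution would only control $\log Z_N$ when $\lambda^3 N$ is small, I would obtain the required multiplicative estimate by induction on $N$. Writing $\mathbb{E}_{N-1}$ for the expectation against the normalised density $\prod_{k<N} e^{\lambda \sqrt{2} \cos(2\pi n_k x)} / Z_{N-1}$, we have
$$
\frac{Z_N(\lambda)}{Z_{N-1}(\lambda)} \;=\; \mathbb{E}_{N-1}\!\left[ e^{\lambda \sqrt{2} \cos(2\pi n_N x)} \right] \;=\; I_0(\sqrt{2}\lambda) + \sum_{m \neq 0} I_{|m|}(\sqrt{2}\lambda)\, \phi_{N-1}(m n_N),
$$
where $\phi_{N-1}(\ell) := \mathbb{E}_{N-1}[e^{2\pi i \ell x}]$ is the characteristic function of the density. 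The key arithmetic input is that any representation $\sum_{k<N} m'_k n_k = -m n_N$ with $m \neq 0$ forces $\sum_k |m'_k| \geq |m| n_N/n_{N-1} \geq |m| q > |m|$, so by integrality $\sum_k |m'_k| \geq |m|+1$. Combined with the Bessel estimate $I_{|m'_k|}(\sqrt{2}\lambda)/I_0(\sqrt{2}\lambda) \leq (C|\lambda|)^{|m'_k|}/|m'_k|!$ and a counting lemma for Hadamard sequences, this yields $|\phi_{N-1}(m n_N)| \leq C(q)|\lambda|^{|m|+1}$. Each off-diagonal term is thus of size $|\lambda|^{|m|} \cdot |\lambda|^{|m|+1} = |\lambda|^{2|m|+1} \leq |\lambda|^3$, so the geometric summation in $m$ gives $\log(Z_N/Z_{N-1}) = \lambda^2/2 + O(\lambda^3)$ uniformly in $N$, and iterating yields $\log Z_N = \lambda^2 N/2 + O(\lambda^3 N)$ as claimed.

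The main obstacle lies in proving the uniform decay $|\phi_{N-1}(m n_N)| \leq C(q)|\lambda|^{|m|+1}$, because $\phi_{N-1}$ is itself a weighted sum over all integer representations of $-m n_N$ as $\sum_k m'_k n_k$, not a single one. This requires a combinatorial counting lemma for Hadamard sequences asserting that the number of representations $\sum_k m'_k n_k = \ell$ of total weight $w$ grows at most like $C(q)^w$; together with the $1/w!$-decay coming from the Bessel factors, this keeps the resulting series in $w$ uniformly convergent for $|\lambda|\leq 1$ with all constants depending only on the Hadamard ratio $q$. The most delicate case is $q$ close to $1$, where low-weight Fibonacci-type resonances between neighbouring frequencies proliferate, and verifying that the counting constant $C(q)$ remains finite for every $q > 1$ is the central technical step.
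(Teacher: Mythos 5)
Your route via the Jacobi--Anger expansion, $Z_N=\sum_{\mathbf m:\sum_k m_kn_k=0}\prod_k I_{|m_k|}(\sqrt2\lambda)$, and the induction step $Z_N/Z_{N-1}=I_0(\sqrt2\lambda)+\sum_{m\neq0}I_{|m|}(\sqrt2\lambda)\phi_{N-1}(mn_N)$ is genuinely different from the paper's proof (which splits $\{1,\dots,N\}$ into long and short blocks, applies H\"older and reverse H\"older with block-dependent exponents, Taylor-expands the exponential to degree $3$ resp.\ $4$ inside long blocks, and uses exact orthogonality between blocks). Your weight bound $\sum_k|m_k'|\geq|m|+1$ is correct and is indeed the arithmetic mechanism behind the cubic term. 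However, the central estimate $|\phi_{N-1}(mn_N)|\leq C(q)|\lambda|^{|m|+1}$ is not proved, and the counting lemma you propose to derive it from is false: the number of representations $\sum_{k<N}m_k'n_k=-mn_N$ of total weight $w$ is \emph{not} $O(C(q)^w)$ uniformly in $N$. For instance, for $n_k=2^k$ take the minimal representation $-2n_{N-1}$ (weight $2$) and graft onto it the zero-sum relation $n_{j+1}-2n_j=0$ at any index $j<N-1$; this yields $\gtrsim N$ distinct representations of $-n_N$ of weight $5$, and inserting several disjoint zero-sum relations produces $\gtrsim N^r$ representations of weight $2+3r$. Any Hadamard sequence admitting infinitely many bounded-coefficient relations (exactly the sequences responsible for the cubic term) has this feature.

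Because of this, the proposed mechanism cannot close the argument in the regime that matters: bounding the numerator of $\phi_{N-1}(mn_N)$ by (number of representations)$\times$(Bessel factors) and using only $Z_{N-1}\geq1$ gives contributions of size $N^r\lambda^{|m|+1+3r}$, which after summing over $r$ is of order $e^{c\lambda^2N}\lambda^{|m|+1}$ rather than $\lambda^{|m|+1}$ --- useless precisely when $\lambda\sqrt N\to\infty$, i.e.\ in the MDP regime the theorem is built for. The true reason $\phi_{N-1}(mn_N)$ is small is a cancellation between numerator and the partition function $Z_{N-1}$ in the denominator: the grafted zero-sum configurations appear in both and must be divided out, which requires some conditioning or factorization of the tilted measure rather than counting. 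This is exactly what the paper's construction delivers: solution counting is performed only \emph{within} a long block of length $L\asymp1/|\lambda|$ (where the relevant counts really are $O(L)$ and $O(L^2)$), while the short separating blocks force all cross-block frequency combinations to be nonzero, so the corresponding integrals vanish identically and no global counting over $\{1,\dots,N\}$ is ever needed. As written, your proof has a genuine gap at its key step; to repair it you would have to establish the uniform bound on $\phi_{N-1}(mn_N)$ by an argument that factors out the far-away frequencies, which in effect reintroduces a block decomposition of the type used in the paper.
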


We give a few examples, which show that the term $\mathcal{O}(\lambda^3)$ in the statement of the theorem is optimal.\\

\begin{itemize}
    \item The sequence $n_k = 2^k,~k \geq 1$. This is the sequence which was already studied by Kac \cite{kac}. Note that this is a structurally very special case, since it has a very clear interpretation in terms of ergodic theory, where it can be seen as the action of the doubling map $T:~x \mapsto 2x$ mod $1$. In this case a limit function $\Lambda (\lambda)  = \lim_{N \to \infty} \Lambda_N (\lambda)$ indeed exists, and is calculated (using combinatorial tools) in the appendix of \cite{agkpr}, where it is shown to be 
    \begin{equation*} \label{2_function}
    \Lambda (\lambda) = \frac{\lambda^2}{2} + \frac{\lambda^3}{2 \sqrt{2}} + \frac{3 \lambda^4}{16} + \dots.
    \end{equation*}
    Note the presence of a cubic term, which is absent in the series \eqref{series} for the random model.\\
    
    \item We remark that the calculation of $\Lambda(\lambda)$ for $n_k=2^k,~k \geq 1$ is somewhat involved, since the system $\cos(2 \pi 2^k x))_{k \geq 1}$ has a rather complicated dependence structure. A simpler model is $n_1 = 1$ and 
    $$
    n_{k+1} = 2n_k \text{ for $k$ odd,}\qquad n_{k+1} = k! n_k \text{ for $k$ even}.
    $$
    Here, the factor $k!$ is quite arbitrarily chosen, since it is only used to break the overall dependence structure into pairs of consecutive indices. Using the fact that $n_{2k+1} / n_{2k}$ increases very quickly, one can show for this sequence that
    $$
    \mathbb{E} \left( \exp \left( \lambda \sum_{k=1}^N \sqrt{2} \cos(2 \pi n_k x) \right) \right) \approx \left(\mathbb{E} \left( \exp \left( \lambda \sqrt{2} \left( \cos(2 \pi x) + \cos(4 \pi x) \right) \right) \right)\right)^{N/2}
    $$
    as $N \to \infty$, and thus
    $$
    \Lambda(\lambda) = \frac{1}{2} \log \left( \int_0^1 \exp \left( \lambda \sqrt{2} \left( \cos(2 \pi x) + \cos(4 \pi x) \right) \right) \mathrm{d}x \right) = \frac{\lambda^2}{2} + \frac{\lambda^3}{4 \sqrt{2}} - \frac{\lambda^4}{16} + \dots
    $$
    Note again the presence of the cubic term.\\

    \item Another simple model is the sequence with $n_1 = 1$, and
    $$
   n_{k+1} = 2 n_k \quad \text{and} \quad  n_{k+2} = 3 n_{k}  \qquad \text{for $k \equiv 1$ mod 3},
    $$
    and $n_{k} = k! n_{k-1}$ when $k \equiv 0$ mod 3. The dependence structure of this sequence decomposes into triples of consecutive indices, and one has 
    \begin{align*}
    & \mathbb{E} \left( \exp \left( \lambda \sum_{k=1}^N \sqrt{2} \cos(2 \pi n_k x) \right) \right) \\
    \approx & \left(\mathbb{E} \left( \exp \left( \lambda \sqrt{2} \left( \cos(2 \pi x) + \cos(4 \pi x) + \cos(6 \pi x) \right) \right) \right)\right)^{N/3}
    \end{align*}
    as $N \to \infty$, and thus
    \begin{align*}
    \Lambda(\lambda) = & \frac{1}{3} \log \left( \int_0^1 \exp \left( \lambda \sqrt{2} \left( \cos(2 \pi x) + \cos(4 \pi x) + \cos(6 \pi x)\right) \right) \mathrm{d}x \right) \\
    = &\frac{\lambda^2}{2} + \frac{\lambda^3}{2\sqrt{2}} + \frac{7 \lambda^4}{144} + \dots
    \end{align*}
    Note again the presence of the cubic term, with yet another coefficient. We further remark that, from a perspective of Diophantine equations (which is from a technical point of view the key to understanding lacunary sums), this sequence is rather different from the two sequences mentioned before. In the above cases, the sequences were constructed in such a way that there are many solutions $k$ of the equation $n_{k+1} = 2 n_k$, and the reason why these solutions contribute to the cubic term in the expansion of $\lambda$ is that this 2-term Diophantine equation has coefficients $1$ and $2$, such that the sum of coefficients is $1+2=3$, with 3 meaning ``cubic'' (the way how the coefficients of these Diophantine equations arise from powers of $\lambda \cos(2 \pi n_k x)$, and the total power of $\lambda$ is thus related to the sum of the coefficients of the Diophantine equations, will become clear during the proof of Theorem \ref{th1}). Now in this new example, the sequence is constructed in such a way that there are many solutions $k$ of $n_{k+1} = n_k + n_{k-1}$, which is a 3-term Diophantine equation with coefficients $1,1,1$, leading to a non-vanishing cubic term in the expansion of $\Lambda(\lambda)$ since 1+1+1=3. The sequence in this last example is modeled on the Fibonacci sequence $(F_k)_{k \geq 1}$, which of course also satisfies $F_{k+2} = F_{k+1} + F_{k}$ (for all $k$, not only for those congruent 0 mod 3). One could probably also study the Fibonacci sequence itself, rather than the sequence $(n_k)_{k \geq 1}$ as constructed above, and prove that $\Lambda_N(\lambda)$ converges to some limiting function $\Lambda(\lambda)$ with non-vanishing cubic term in its series expansion, but the dependence structure of $(\cos(2\pi F_k x))_{k \geq 1}$ is much more delicate than that of our construction, and it is a priori not even clear that $\Lambda_N(\lambda)$ converges for $(F_k)_{k \geq 1}$. Note that in contrast to $(\cos(2 \pi 2^k x))_{k \geq 1}$, the system $(\cos(2 \pi F_k x))_{k \geq 1}$ does not seem to have an immediate interpretation from the perspective of ergodic theory.\\
    
    \item From the discussion above, it is not difficult to construct a lacunary sequence $(n_k)_{k \geq 1}$ for which $\Lambda_N(\lambda)$ fluctuates and does not converge as $N \to \infty$ (beyond the quadratic term in the series expansion). Such a sequence could for example be constructed by ``mixing'' finite segments (of increasing length) of the sequences from the previous examples. Together with the examples above, this shows that indeed $\Lambda_N(\lambda) = \lambda^2/2 + \mathcal{O}(\lambda^3)$ is the best result one could possibly hope for, for general lacunary sequences $(n_k)_{k \geq 1}$.\\
\end{itemize}

We note in passing that our method of proof entails a potential dependence on the growth factor $q$ for the implied constant in the $\mathcal{O}(\lambda^3)$ term, in the sense that this implied constant possibly needs to be increased as $q \to 1$. It seems plausible that this $q$-dependent implied constant could actually be replaced by an absolute constant, the optimal value for such a constant possibly being $1/\sqrt{2}$ (arising from the case of the Fibonacci sequence). However, we have not pursued this topic any further.\\
\par{}

Our ability to provide optimal estimates for the (asymptotic) MGF of $\sum_{k =1}^N \sqrt{2} \cos(2 \pi n_k x)$ allows us to establish moderate deviation principles (MDPs) (see Corollary \ref{cor:MDP}) for the latter via the Gärtner-Ellis theorem (see, e.g., \cite[Theorem 2.3.6]{DZLDPs}). These principles, originating from \cite{RJMDP}, have become an important concept in the study of the distribution of sequences of random variables. Let $\lambda = \lambda_N$ be a null sequence with $ \lambda \sqrt{N} \to \infty$ and let $(X_N)_{N \in \mathbb{N}}$ be a sequence of random variables defined on $(0,1)$. We say that $\left( X_N \right)_{N \in \mathbb{N}}$ satisfies an MDP at speed $ 1/\lambda^2 $ if there exists $\sigma^2 > 0$ such that for all Borel-sets $B \subseteq \mathbb{R}$ we have
\begin{equation}
\label{eq:MDP}
- \inf_{x \in B^{\circ}} \frac{x^2}{2 \sigma^2} \leq \liminf_{N \rightarrow \infty} \lambda^2  \log \mathbb{P} \left( X_N \in B \right) \leq \limsup_{N \rightarrow \infty}  \lambda^2  \log \mathbb{P} \left( X_N \in B \right) \leq - \inf_{x \in \overline{B}} \frac{x^2}{2 \sigma^2}, 
\end{equation}
where $B^{\circ}$ is the interior and $\overline{B}$ is the closure of $B$. If one considers a sum of independent random variables $Y_1 + \ldots + Y_N$, all having mean $0$ and variance $\sigma^2$, it is well-known that $ X_N:= \frac{\lambda}{\sqrt{N}} \sum_{k=1}^N Y_k$ satisfies an MDP at speed $1/\lambda^2 $ with variance $\sigma^2$ (see, e.g., \cite[Theorem 3.7.1]{DZLDPs}). Heuristically speaking, for such a sequence, the central limit theorem would suggest 
\[
\mathbb{P} \left( \frac{\lambda}{\sqrt{N}} \sum_{k=1}^N Y_k \geq x \right) \approx e^{- \frac{x^2}{2 (\lambda \sigma)^2}}, \quad \text{as} \quad N \longrightarrow \infty.
\]
An MDP tells us that, on a logarithmic scale, the above holds even for arbitrary Borel sets instead of intervals of the form $[x, \infty)$ for some $x > 0$. 
In the following corollary we use Theorem \ref{th1} to obtain moderate deviation principles for arbitrary trigonometric lacunary sums.

\begin{cor}
\label{cor:MDP}
Let $(n_k)_{k \in \mathbb{N}}$ be a lacunary sequence satisfying \eqref{had}. Then, for any positive null sequence $\lambda = \lambda_N$ with $\sqrt{N} \lambda \to \infty$,
\[
\frac{\lambda}{\sqrt{N}} \sum_{k=1}^N \sqrt{2} \cos(2 \pi n_k x)
\]
satisfies an MDP at speed $1/\lambda^2$ in the sense of \eqref{eq:MDP} with $\sigma^2=1$.
\end{cor}

One should compare Corollary \ref{cor:MDP}, which gives a clean universal result, to the results in \cite{PSMDP}, where MDPs for lacunary sums $\sum f(n_k x)$ generated by trigonometric polynomials $f$ were studied. There, the authors had to impose strong arithmetic assumptions on the lacunary sequence $(n_k)_{k \geq 1}$, as well as an additional growth condition upon $\lambda$, in order to obtain their moderate deviation principles (see \cite[Theorem 1]{PSMDP}). It is not surprising that stronger assumptions are necessary to obtain MDPs in the general case $\sum f(n_k x)$, since CLT and LIL also hold in the case of $\sum f(n_k x)$ only under additional arithmetic assumptions. However, \cite{PSMDP} provides only sufficient criteria for the validity of an MDP, while sharp (necessary and sufficient) arithmetic conditions are known in the CLT and LIL cases, see \cite{ab,afp}.
 
\section{Proofs}

\begin{proof}[Proof of Theorem \ref{th1}]

Assume that $N$ is ``large'', and let $\lambda \in \mathbb{R}$ be given. Let $L$ (``long'') and $s$ (``short'') be two positive integers, to be specified later. We decompose the set of indices $\{1,2,\dots, N\}$ into a disjoint union of blocks
\begin{equation}\label{decomp}
\{1, \dots, N\} = \Delta_1\cup\Delta_1^\prime\cup\Delta_2\cup\Delta_2^\prime\cup \cdots \cup \Delta_M\cup\Delta_M^\prime,
\end{equation}
such that $|\Delta_i|=L$ and $|\Delta_i^\prime|=s$ for $1\leq i\leq M$, where $M = N / (L + s)$. To simplify notations we assume that indeed $N$ is divisible by $s+L$ without remainder; otherwise one has to shorten one of the last two blocks and possibly skip $\Delta_M^\prime$, which does not affect the overall calculation. The decomposition in \eqref{decomp} has to be understood in the sense that 
$$
\Delta_1 < \Delta_1^\prime < \Delta_2 < \Delta_2^\prime < \dots 
$$
holds element-wise. The sum over all the long blocks $\Delta_i$ will give the main contribution, and the role of the short blocks $\Delta_i^\prime$ is to ensure that the frequencies $n_k$ and $n_\ell$ of two trigonometric functions $\cos(2 \pi n_k x)$ and $\cos(2 \pi n_\ell x)$ with indices $k$ and $\ell$ coming from two {\em different} long blocks are of very different size, thus creating a high degree of ``independence'' between trigonometric functions (interpreted as random variables) whenever the respective indices are in different long blocks.\\

Recall that $q>1$ is the growth factor of the lacunary sequence under consideration. We define $s$ as the smallest positive integer for which 
\begin{equation} \label{s_size}
q^s > \left(1-q^{-1/2}\right)^{-1} \quad \text{and} \quad 1 + 4 q^{-s} \leq \sqrt{q}.
\end{equation}
The precise nature of these assumptions in not important for the moment (they will be required at different places in the proof), but we remark that they can all be satisfied by choosing $s$ sufficiently large in terms of $q$. Note in particular that $s$ clearly does not depend on $N$ or $\lambda$. Furthermore, we set $L= \left\lceil \frac{1}{2 |\lambda|} \right\rceil$. Note that the conclusion of the theorem is trivial unless $|\lambda| \to 0$ as $N \to \infty$. Thus we can assume, throughout the rest of the paper, that $N$ and $\lambda$ are such that
\begin{equation} \label{sL_assumptions}
s < L \qquad \text{and} \qquad L \leq \frac{1}{\sqrt{2} |\lambda|}.
\end{equation}

We have
 \begin{equation}\begin{split}
 &  \int_0^1 \exp \left( \lambda \sum_{k=1}^N \sqrt{2}\cos(2 \pi n_k x) \right) \mathrm{d}x \\
 = & \int_0^1 \prod_{i=1}^M \left(\exp \left( \lambda \sum_{k\in \Delta_i} \sqrt{2}\cos(2 \pi n_k x) \right) \cdot \exp \left( \lambda \sum_{k\in \Delta_i^\prime}\sqrt{{2}} \cos(2 \pi n_k x) \right) \right) \mathrm{d}x, \label{hölder_this}
 \end{split}
 \end{equation}
and the aim now is to estimate this integral of a product in terms of a product of integrals (note that under actual stochastic independence, the integral of a product would automatically decompose into a product of integrals, so what we are trying to achieve here is to mimic such independent behavior).\\ 

We start by giving an upper bound for the integral in \eqref{hölder_this}. One can easily show that 
\begin{equation} \label{taylor_short}
\exp(x)\leq 1+x+x^2, \qquad \text{for $|x|\leq 1$,}
\end{equation}
as well as 
\begin{equation} \label{taylor_long}
\exp(x)\leq 1+x+\frac{x^2}{2}+\frac{x^3}{6} + x^4, \qquad \text{also for $|x|\leq 1$}.
\end{equation}

Since by \eqref{sL_assumptions} we have
\begin{align}\label{Taylor_long}
     \left|\lambda \sum_{k\in \Delta_i} \sqrt{2}\cos(2 \pi n_k x) \right|\leq  \sqrt{2} |\lambda | L = \sqrt{2}|\lambda| \frac{1}{\sqrt{2} |\lambda|}\leq 1,
\end{align} 
using \eqref{taylor_long} for a long block $\Delta_i$ for some $i$ we obtain
\begin{align}
& \exp \left(\lambda \sum_{k\in \Delta_i} \sqrt{2}\cos(2 \pi n_k x) \right) \nonumber\\
\leq&\, 1 +  \lambda \sum_{k\in \Delta_i} \sqrt{2}\cos(2 \pi n_k x)+\frac{\lambda^2}{2} \left(\sum_{k\in \Delta_i}\sqrt{2}\cos(2 \pi n_k x)\right)^2  \label{expand_1}\\
&\phantom{\, 1 } + \frac{\lambda^3}{6} \left(\sum_{k\in \Delta_i}\sqrt{2}\cos(2 \pi n_k x)\right)^3 + \lambda^4 \left(\sum_{k\in \Delta_i}\sqrt{2}\cos(2 \pi n_k x)\right)^4, \label{expand_2}
\end{align}
and using \eqref{taylor_short} for a short block we similarly have
\begin{align}
& \exp \left(\lambda \sum_{k\in \Delta_i'} \sqrt{2}\cos(2 \pi n_k x) \right) \nonumber\\
\leq & 1 + \lambda \sum_{k\in \Delta_i'} \sqrt{2}\cos(2 \pi n_k x)+ \lambda^2 \left(\sum_{k\in \Delta_i'}\sqrt{2}\cos(2 \pi n_k x)\right)^2 \nonumber\\
\leq & 1 + \lambda \sum_{k\in \Delta_i'} \sqrt{2}\cos(2 \pi n_k x)+ 2\lambda^2  s^2. \label{only_kept}
\end{align}
Note that we are more precise with the approximation for the long blocks, and take a polynomial of relatively high degree to upper-bound the exponential function. In particular, the Taylor polynomial that we use for the long blocks needs to have the ``correct'' constant $1/2$ in the quadratic term, since this will show up as the main term $\exp(\lambda^2 N / 2)$ for the MGF in the final result. Furthermore, the polynomial for the long blocks needs to have degree at least three, since we want to end up with a cubic error term in the final result, and since the polynomial actually needs to be a pointwise upper bound for the exponential function, it turns out that we actually need to go up to degree four (i.e.\ even degree) with this polynomial. In contrast, the short blocks will only give a negligible contribution, and in \eqref{only_kept} only the linear terms remain as an actual function of $\cos(2 \pi n_k x)$ to keep the subsequent combinatorics as simple as possible. There is a very delicate balance between the lengths of the long and short blocks in the decomposition of the index set, and the degree of the polynomials which are used for each of the two respective cases.\\

We will expand the sums in \eqref{expand_1} and \eqref{expand_2}, keeping special track of the diagonal terms in the squared expression, for which we use $\cos(x)^2 = 1/2 + \cos(2 x)/2$. Hence we can rewrite the expression in lines  \eqref{expand_1} and \eqref{expand_2} as
\begin{align*}
S_i(x) := &\, 1 + \frac{\lambda^2 L}{2} +  \lambda \sum_{k \in \Delta_i} \sqrt{2}\cos(2 \pi n_k x) + \frac{\lambda^2}{2} \sum_{k \in \Delta_i} \cos(2 \pi 2 n_k x) \\
& \qquad + \lambda^2 \sum_{\substack{k_1,k_2 \in \Delta_i, \\ k_1 \neq k_2}} \cos(2 \pi n_{k_1} x) \cos(2\pi n_{k_2} x) \\
& \qquad + \frac{\lambda^3 2^{3/2}}{6} \sum_{k_1,k_2,k_3 \in \Delta_i} \cos(2 \pi n_{k_1} x) \cos(2 \pi n_{k_2} x) \cos(2 \pi n_{k_3} x)  \\
& \qquad + 4 \lambda^4  \sum_{k_1,k_2,k_3,k_4 \in \Delta_i} \cos(2 \pi n_{k_1} x) \cos(2 \pi n_{k_2} x) \cos(2 \pi n_{k_3} x) \cos(2 \pi n_{k_4} x).
\end{align*}
Recall that $\cos(x) \cos(y) = \frac{\cos(x+y)}{2} + \frac{\cos(x-y)}{2}$. Accordingly, all the products of cosine-functions which appear in the definition of $S_i(x)$ can be rewritten into cosines of sums/differences of frequencies such as $n_{k_1} \pm n_{k_2}$, $n_{k_1} \pm n_{k_2} \pm n_{k_3}$ or $n_{k_1} \pm n_{k_2} \pm n_{k_3} \pm n_{k_4}$. We will denote by $i^-$ and $i^+$ the smallest and the largest elements of $\Delta_i$, respectively. Then the sum $\sum_{k \in \Delta_i} \cos(2 \pi n_k x)$ in the definition of $S_i(x)$ is a trigonometric polynomial with frequencies between $n_{i^-}$ and $n_{i^+}$. Likewise, the sum $\sum_{k \in \Delta_i} \cos(2 \pi 2 n_k x)$ is a trigonometric polynomial with frequencies between $2 n_{i^-}$ and $2 n_{i^+}$. For the next term, we note that 
\begin{align*}
& \lambda^2 \sum_{\substack{k_1,k_2 \in \Delta_i, \\ k_1 \neq k_2}} \cos(2 \pi n_{k_1} x) \cos(2\pi n_{k_2} x) \\
= & 2 \lambda^2 \sum_{\substack{k_1,k_2 \in \Delta_i, \\ k_1 > k_2}} \left( \frac{\cos(2 \pi (n_{k_1} + n_{k_2})  x)}{2} + \frac{\cos(2 \pi (n_{k_1} - n_{k_2})  x)}{2} \right).
\end{align*}
Clearly, the value of $n_{k_1}+n_{k_2}$ is always between $2n_{i^-}$ and $2 n_{i^+}$. For the value of $n_{k_1} - n_{k_2}$, we need to be a bit more careful. When $k_1 > k_2$, then 
$$
n_{k_1} - n_{k_2} \geq (1 - q^{-1}) n_{k_1} \geq n_{i^-}, \qquad \text{provided that $k_1 - i^- \geq \log_q \left( \left( 1 - q^{-1}\right)^{-1} \right)$.}
$$
Accordingly, the number of pairs of indices $k_1,k_2 \in \Delta_i$ with $k_1 > k_2$, for which $n_{k_1} - n_{k_2}$ is smaller than $n_{i^-}$, is $\mathcal{O}(1)$. Thus there exists a trigonometric cosine-polynomial $T_i^{(2)}(x)$, all of whose frequencies lie between $n_{i^-}$ and $2 n_{i^+}$, such that
$$
\lambda^2 \sum_{\substack{k_1,k_2 \in \Delta_i, \\ k_1 \neq k_2}} \cos(2 \pi n_{k_1} x) \cos(2\pi n_{k_2} x) = \lambda^2 T_i^{(2)}(x) + \underbrace{\mathcal{O} (\lambda^2)}_{= \mathcal{O} (\lambda^3 L)}.
$$
This finishes our analysis of the contribution of the terms of second order to $S_i$. The contribution of the terms of third and fourth order is slightly more difficult to analyze.\\

Consider the term $\sum_{k_1,k_2,k_3 \in \Delta_i} \cos(2 \pi n_{k_1} x) \cos(2 \pi n_{k_2} x) \cos(2 \pi n_{k_3} x)$. The product of cosine-functions can be re-written as a linear combination of functions of the form $\cos(2\pi (\pm n_{k_1} \pm n_{k_2} \pm n_{k_3})x)$. If all three ``$\pm$'' signs are the same, then such a cosine has a frequency between $3 n_{i^-}$ and $3 n_{i^+}$. For other combinations of signs, we are interested in how often the frequency $|\pm n_{k_1} \pm n_{k_2} \pm n_{k_3}|$ can be ``small''. By symmetry, it suffices to count the number of such triples of indices in the case where the combinations of signs is ``$(+,-,-)$''. Thus, let us assume that 
\begin{equation} \label{equ_three}
\left| n_{k_1}- n_{k_2} - n_{k_3} \right| < n_{i^-}
\end{equation}
for some triple of indices $(k_1,k_2,k_3) \in \Delta_i^3$, and see how often this is possible. Assume w.l.o.g.\ that $k_2 \geq k_3$. If $k_2 \geq k_1$, then $n_{k_1}- n_{k_2} - n_{k_3} \leq -n_{k_3} \leq - n_{i^-}$, so in this case \eqref{equ_three} fails. Thus, we must have $k_1 > k_2$ (recall that we also assumed $k_2 \geq k_3$). There is only an absolutely bounded amount of possibilities for $(k_1,k_2,k_3) \in \Delta_i^3$ with $k_1 > k_2 \geq k_3$, for which $n_{k_1} \leq 2 n_{i^-}$. On the other hand, if $k_1$ is such that $n_{k_1} \geq 2 n_{i^-}$, then it is easily seen that $n_{k_2} \geq \frac{n_{k_1}}{4}$ is a necessary condition for \eqref{equ_three}, and by the lacunary growth condition there are at most $\mathcal{O}(1)$ many choices of $k_2$ with $k_2 < k_1$ for which this holds. 
Finally, for any given $k_1$ and $k_2$, there are at most $\mathcal{O}(1)$ many values of $k_3$ for which \eqref{equ_three} holds, again as a consequence of the lacunary growth condition. Overall, that means that the number of triples $(k_1,k_2,k_3) \in \Delta_i^3$ for which \eqref{equ_three} can hold is of cardinality at most $\mathcal{O}(L)$. In other words, we can write 
$$
\frac{\lambda^3 2^{3/2}}{6} \sum_{k_1,k_2,k_3 \in \Delta_i} \cos(2 \pi n_{k_1} x) \cos(2 \pi n_{k_2} x) \cos(2 \pi n_{k_3} x)  = \lambda^3 T_i^{(3)}(x) + \mathcal{O} (\lambda^3 L), 
$$
where $T_i^{(3)}(x)$ is a suitable trigonometric cosine-polynomial whose frequencies are all between $n_{i^-}$ and $3 n_{i^+}$.\\

Finally, we carry out a similar analysis for 
$$
\sum_{k_1,k_2,k_3,k_4 \in \Delta_i} \cos(2 \pi n_{k_1} x) \cos(2 \pi n_{k_2} x) \cos(2 \pi n_{k_3} x) \cos(2 \pi n_{k_4} x).
$$
This can be re-written into a sum of cosine-functions of the form $\cos (2\pi (\pm n_{k_1} \pm n_{k_2} \pm n_{k_3} \pm n_{k_4})x)$. If all four ``$\pm$'' signs are the same, then the frequency is between $4 n_{i^-}$ and $4 n_{i^+}$. The other two cases (up to symmetry) are those of signature $(+,+,-,-)$ and  of signature $(+,+,+,-)$.\\
Assume first that we study the case of signature $(+,+,-,-)$, and that for some $(k_1,k_2,k_3,k_4) \in \Delta_i^4$ we have
\begin{equation} \label{equ_four}
|n_{k_1} + n_{k_2} - n_{k_3} - n_{k_4} | < n_{i^-}. 
\end{equation}
Assume w.l.o.g.\ that $k_1 \geq k_2$, that $k_3 \geq k_4$, and that $k_1 \geq k_3$. There are only $\mathcal{O}(1)$ many 4-tuples $(k_1,k_2,k_3,k_4)$ satisfying these requirements for which $n_{k_1} \leq 2 n_{i^-}$. Thus we will assume in the sequel that $n_{k_1} \geq 2 n_{i^-}$. Then it is easily seen that $n_{k_3} \geq \frac{n_{k_1}}{4}$ is a necessary condition for \eqref{equ_four}, since otherwise
$$
n_{k_1} + n_{k_2} - n_{k_3} - n_{k_4} \geq n_{k_1} - 2 n_{k_3} \geq \frac{n_{k_1}}{2} \geq n_{i^-}.
$$
Note that for given $k_1$, there are only $\mathcal{O}(1)$ many choices of $k_3$ for which $k_3 \leq k_1$ and $n_{k_3} \geq \frac{n_{k_1}}{4}$. Now, for any given $k_1,k_2,k_3$, there are at most $\mathcal{O}(1)$ many choices of $k_4$ for which \eqref{equ_four} holds. Thus in summary, in \eqref{equ_four} at most $k_1,k_2$ can be freely chosen (which we can do in $L^2$ many ways), which then restricts $k_3,k_4$ to $\mathcal{O}(1)$ many possibilities, and so the number of 4-tuples $(k_1,k_2,k_3,k_3)$ for which \eqref{equ_four} holds is of order at most $\mathcal{O}(L^2)$. \\
Next we study the case of signature $(+,+,+,-)$, and assume that for some $(k_1,k_2,k_3,k_4) \in \Delta_i^4$ we have
\begin{equation} \label{equ_four_b}
|n_{k_1} + n_{k_2} + n_{k_3} - n_{k_4} | < n_{i^-}. 
\end{equation}
It is clear that this can only be true if $k_4 > \max(k_1,k_2,k_3)$. Assume w.l.o.g.\ that $k_3 \geq \max(k_1,k_2)$. There are $\mathcal{O}(1)$ many $(k_1, k_2,k_3,k_4) \in \Delta_i^4$ with $k_4 > \max(k_1,k_2,k_3)$ and simultaneously $n_{k_4} \leq 2 n_{i^-}$. Thus we will assume in the sequel that $n_{k_4} \geq 2n_{i^-}$. In this case it is easily seen that $n_{k_3} \geq \frac{n_{k_4}}{6}$ is necessary for \eqref{equ_four_b} to hold, as a consequence of $k_3 \geq \max(k_1,k_2)$. Recall that we must also have $k_3 < k_4$. Thus for given $k_4$, there are only $\mathcal{O}(1)$ many possible values for $k_3$. Furthermore, for any given configuration of $k_2,k_3,k_4$, there are at most $\mathcal{O}(1)$ many possible values for $k_1$ such that \eqref{equ_four_b} is satisfied. Thus the number of 4-tuples $(k_1,k_2,k_3,k_4) \in \Delta_i^4$ for which \eqref{equ_four_b} holds is again of order at most $\mathcal{O}(L^2)$. Consequently, we can write 
\begin{align*}
& 4 \lambda^4  \sum_{k_1,k_2,k_3,k_4 \in \Delta_i} \cos(2 \pi n_{k_1} x) \cos(2 \pi n_{k_2} x) \cos(2 \pi n_{k_3} x) \cos(2 \pi n_{k_4} x) \\
= & \lambda^4 T_i^{(4)}(x) + \underbrace{\mathcal{O}(\lambda^4 L^2)}_{= \mathcal{O}(\lambda^3 L)},
\end{align*}
where $T_i^{(4)}(x)$ is a trigonometric cosine-polynomial whose frequencies are all between $n_{i^-}$ and $4 n_{i^+}$.\\

Combining all our findings, we have shown that there exists a trigonometric cosine-polynomial \[T_i(x) := \lambda \sum_{k \in \Delta_i} \sqrt{2}\cos(2 \pi n_k x) + \frac{\lambda^2}{2} \sum_{k \in \Delta_i} \cos(2 \pi 2 n_k x) + \lambda^2 T_i^{(2)}(x) + \lambda^3 T_i^{(3)}(x) + \lambda^4 T_i^{(4)}(x),\] all of whose frequencies are between $n_{i^-}$ and $4 n_{i^+}$, such that
$$
S_i(x) = 1 + \frac{\lambda^2 L}{2} + T_i(x) + \mathcal{O}(\lambda^3 L).
$$
Thus, recalling \eqref{only_kept} we have
\begin{align}
& \exp \left(\lambda \sum_{k\in \Delta_i} \sqrt{2}\cos(2 \pi n_k x) \right)   \exp \left(\lambda \sum_{k\in \Delta_i'} \sqrt{2}\cos(2 \pi n_k x) \right) \nonumber\\
\leq & \left(1 + \frac{\lambda^2 L}{2} + T_i(x) + \mathcal{O}(\lambda^3 L) \right) \left(1 + \lambda \sum_{k\in \Delta_i'} \sqrt{2}\cos(2 \pi n_k x)+ 2\lambda^2  s^2 \right) \nonumber\\ 
\leq & 1 + \frac{\lambda^2 L}{2} + T_i(x) +  \lambda \sum_{k\in \Delta_i'} \sqrt{2}\cos(2 \pi n_k x) + T_i(x) \lambda \sum_{k\in \Delta_i'} \sqrt{2}\cos(2 \pi n_k x) + \mathcal{O}(\lambda^3 L), \label{continue_here}\textit{}
\end{align}
where we have also used that $S_i(x)$ and hence $T_i(x)$ is of order $\mathcal{O}(1)$.
We write
$$
T_i(x) = \sum_{m \geq 1} c_m \cos(2 \pi m x)
$$
for suitable coefficients $c_m$, and let $m$ be the frequency of a cosine-function that appears as a summand of $T_i(x)$ with a coefficient $c_m \neq 0$. Let $\ell \in \Delta_i'$ be fixed. We ask when it is possible that 
\begin{equation} \label{mell}
|m - n_\ell| \leq n_{i^-}.
\end{equation} 
For this we need to consider how $m$ can arise as a summand of $T_i(x)$. 
\begin{itemize}
\item First case: $m$ appears as a frequency in $\lambda \sum_{k \in \Delta_i} \sqrt{2} \cos(2 \pi n_k x)$ for some $k$. But then
$$
|m - n_\ell| \geq ( 1-q^{-1} ) n_\ell > n_{i^-},
$$ 
as a consequence of $\ell \geq i^+ + 1 \geq i^- + L \geq i^- + s$, so that \[\frac{n_\ell}{n_{i^-}} \geq q^s > (1-q^{-1/2})^{-1} \geq (1-q^{-1})^{-1}\] by the first inequality in \eqref{s_size}. So this case is actually impossible.
\item Second case: $m$ appears as a summand in $\frac{\lambda^2}{2} \sum_{k \in \Delta_i} \cos(2 \pi 2 n_k x)$. For fixed $\ell$ there are at most $\mathcal{O}(1)$ many $m$ satisfying \eqref{mell} that arise in this way, and all such contributions have a factor $\lambda^2$.
\item Third case: $m$ appears in $T_i^{(2)}$, where it arises as $m=n_{k_1} \pm n_{k_2}$. In either case of the sign, for fixed $k_1$, there are at most $\mathcal{O}(1)$ possibilities for $k_2$ such that \eqref{mell} is satisfied. Thus, there are at most $\mathcal{O}(L)$ many possibilities how $m$ can arise in this way, where each such occurrence comes with a factor $\lambda^2$.
\item Fourth case: $m$ appears in $T_i^{(3)}$, where it arises as $m = \pm n_{k_1} \pm n_{k_2} \pm n_{k_3}$. For a fixed configuration of $\pm$ signs, and for any choice of $k_1,k_2 \in \Delta_i$, there are at most $\mathcal{O}(1)$ many values of $k_3$ such that \eqref{mell} is satisfied. Thus overall there are at most $\mathcal{O}(L^2)$ many possible ways how $m$ can arise in this way, and each such occurrence has a factor $\lambda^3$. 
\item Fifth case: $m$ appears in $T_i^{(4)}$, where it arises as $m = \pm n_{k_1} \pm n_{k_2} \pm n_{k_3} \pm n_{k_4}$. For a fixed configuration of $\pm$ signs, and for any choice of $k_1,k_2,k_3 \in \Delta_i$, there are at most $\mathcal{O}(1)$ many values of $k_4$ such that \eqref{mell} satisfied. Thus overall there are at most $\mathcal{O}(L^3)$ many possible ways how $m$ can arise in this way, and each such occurrence has a factor $\lambda^4$. 
\end{itemize} 
We have 
\begin{align*}
& T_i(x)  \lambda \sum_{\ell \in \Delta_i'} \sqrt{2}\cos(2 \pi n_\ell x) \\   
= & \sum_{m \geq 1} c_m \cos(2 \pi m x) \lambda \sum_{\ell\in \Delta_i'} \sqrt{2}\cos(2 \pi n_\ell x) \\
= &  \frac{\lambda}{2} \sum_{m \geq 1} c_m \sum_{\ell\in \Delta_i'} \sqrt{2}\cos(2 \pi (m + n_\ell) x) + \frac{\lambda}{2} \sum_{m \geq 1} c_m \sum_{\ell \in \Delta_i'} \sqrt{2}\cos(2 \pi (m - n_\ell) x)
\end{align*}
In the sequel, let us write $i^*$ for the largest index in $\Delta_i'$. By construction we have $i^* \geq i^+ + s$, so that
\begin{equation} \label{n4n}
n_{i^*} + 4n_{i^+} \leq n_{i^*} \left( 1 + 4q^{-s} \right) \leq \sqrt{q} n_{i^*}
\end{equation}
by the second inequality in \eqref{s_size}. Accordingly, since the frequencies of $T_i(x)$ are by construction all between $n_{i^-}$ and $4 n_{i^+}$, the function
$$
\frac{\lambda}{2} \sum_{m \geq 1} c_m \sum_{\ell \in \Delta_i'} \sqrt{2}\cos(2 \pi (m + n_\ell) x)
$$
is a trigonometric cosine-polynomial whose frequencies are all between $n_{i^-}$ and $n_{i^*} + 4 n_{i^+} \leq \sqrt{q} n_{i^*}.$ From what was said in the case distinction above, the function 
$$
\frac{\lambda}{2} \sum_{m \geq 1} c_m \sum_{\ell \in \Delta_i'} \sqrt{2}\cos(2 \pi (m - n_\ell) x)
$$
is a trigonometric cosine-polynomial whose frequencies are all between $n_{i^-}$ and $n_{i^*}$, except for those arising from solutions of \eqref{mell}. Put formally, by the case distinction made above, there exists a trigonometric polynomial $U_i(x)$ whose frequencies are all between $n_{i^-}$ and $n_{i^*}$, such that
$$
T_i(x)\lambda \sum_{\ell \in \Delta_i'} \sqrt{2}\cos(2 \pi n_\ell x) = U_i (x) + \mathcal{O} \left(\lambda^3+ \lambda^3 L + \lambda^4 L^2 + \lambda^5 L^3 \right) = U_i (x) + \mathcal{O} \left(\lambda^3 L \right)
$$
(note how all error terms coming from the case distinction above are multiplied with another factor $\lambda$, coming from the multiplication of $T_i(x)$ with $\lambda \sum_{\ell \in \Delta_i'} \sqrt{2}\cos(2 \pi n_\ell x)$).\\

Accordingly, continuing from \eqref{continue_here}, and writing 
$$
W_i(x) := T_i(x) +  \lambda \sum_{k\in \Delta_i'} \sqrt{2}\cos(2 \pi n_k x)  + U_i(x),
$$
we have
$$
\exp \left(\lambda \sum_{k\in \Delta_i} \sqrt{2}\cos(2 \pi n_k x) \right)   \exp \left(\lambda \sum_{k\in \Delta_i'} \sqrt{2}\cos(2 \pi n_k x) \right) \leq 1 + \frac{\lambda^2 L}{2} + W_i(x) + \mathcal{O}(\lambda^3 L),
$$
where $W_i$ is a trigonometric cosine-polynomial all of whose frequencies are between $n_{i^-}$ and $\sqrt{q} n_{i^*}$. 
Plugging this into \eqref{hölder_this}, we have
$$
\int_0^1 \exp \left( \lambda \sum_{k=1}^N \sqrt{2}\cos(2 \pi n_k x) \right) \mathrm{d}x \leq \int_0^1 \prod_{i=1}^M \left( 1 + \frac{\lambda^2 L}{2} + W_i(x) + \mathcal{O}(\lambda^3 L) \right)  \mathrm{d}x.
$$
 Note that the reason for the whole construction was to ensure that integration and multiplication can be exchanged on the right-hand side of the previous equation. We claim that we have the equality
 \begin{align}
 \int_0^1 \prod_{i=1}^M \left( 1 + \frac{\lambda^2 L}{2} + W_i(x) \right) dx & =  \prod_{i=1}^M \int_0^1 \left( 1 + \frac{\lambda^2 L}{2} + W_i(x) \right) dx \label{see_this}\\
 & = \prod_{i=1}^M \left( 1 + \frac{\lambda^2 L}{2}\right), \nonumber
 \end{align}
 which then entails

 \begin{align*}
 \int_0^1 \prod_{i=1}^M \left( 1 + \frac{\lambda^2 L}{2} + W_i(x) + {\mathcal{O}(\lambda^3 L)} \right) dx & \le  \prod_{i=1}^M \int_0^1 \left( 1 + \frac{\lambda^2 L}{2} + W_i(x) + {\mathcal{O}(\lambda^3 L)} \right) dx \\
 & = \prod_{i=1}^M \left( 1 + \frac{\lambda^2 L}{2} + {\mathcal{O}(\lambda^3 L)}\right).
 \end{align*}
 Here we used the fact that, even though the $\mathcal{O}(\lambda^3 L)$ term also depends on $x$, we can still bound each factor $1 + \frac{\lambda^2 L}{2} + W_i(x) + \mathcal{O}(\lambda^3 L)$ from above. By the construction of $W_i(x)$, we have that each such factor is positive and thus a product of upper bounds is an upper bound for the product.
 
 To see the validity of \eqref{see_this}, i.e.\ the uncorrelatedness of the $W_i$, let us assume that after multiplying out in \eqref{see_this}, we want to integrate a product $W_{h_1}(x) W_{h_2}(x) \cdots W_{h_r}(x)$ for some $1 \leq h_1 < \dots < h_r \leq M$ and for some $r \leq M$. Since the $W_i$ are all sums of trigonometric functions, what we actually need to show is that
 $$
 \int_0^1 \prod_{i=1}^r \cos(2 \pi \omega_{h_i} x) dx = 0,
 $$
 whenever $\omega_{h_i}$ is a frequency that appears in $W_{h_i}(x)$, for $1 \leq i \leq r$. This is true if there is no configuration of $\pm$ signs such that
 $$
 \pm  \omega_{h_1} \pm \omega_{h_2} \pm \dots \pm \omega_{h_r} = 0.
 $$
 Recall that the $W_i$ were defined in such a way that for $1 \leq i \leq r$, the function $W_{h_i}$ is a sum of cosine-terms, all of whose frequencies are between $n_{h_i^-}$ and $\sqrt{q} n_{h_i^*}$ (in accordance with the definitions from before, in what follows $h_i^-$ denotes the smallest index in the block $\Delta_{h_i}$, while $h_i^+$ denotes the largest index in $\Delta_{h_i}$, and $h_i^*$ denotes the largest index in $\Delta_{h_i}'$). Thus we have in particular $\omega_{h_r} \geq n_{{h_r}^-}$, and $\omega_{h_{r-1}} \leq  \sqrt{q} n_{h_{r-1}^*}$. Furthermore, clearly $h_{r-1}^* < {h_r}^-$, so that by the lacunary growth condition we have $q n_{h_{r-1}^*} \leq n_{h_r^-}$. Accordingly, 
$$
\omega_{h_{r-1}} \leq \sqrt{q} n_{h_{r-1}^*} \leq \frac{n_{h_r^-}}{\sqrt{q}} \leq \frac{\omega_{h_r}}{\sqrt{q}},
$$
which implies that
\begin{equation} \label{omega_1}
 \omega_{h_r} - \omega_{h_{r-1}} \geq \left(1 - q^{-1/2} \right) \omega_{h_r} \geq \left(1 - q^{-1/2} \right) n_{{h_r}^-}.
\end{equation}
 
The sum of all the remaining frequencies is small in comparison. More precisely, by construction we have $\omega_{h_{r-2}} \geq \sqrt{q} \omega_{h_{r-3}}$, we have $\omega_{h_{r-3}} \geq \sqrt{q} \omega_{h_{r-4}}$, and so on, so that
\begin{eqnarray}
\omega_{h_{r-2}} + \omega_{h_{r-3}} + \dots + \omega_{h_1} & \leq & \omega_{h_{r-2}} \sum_{u=0}^\infty q^{-u/2} \nonumber\\
& \leq & \sqrt{q} n_{h_{r-2}^*} \left(1 - q^{-1/2} \right)^{-1}.  \label{omega_2}
\end{eqnarray}

Since ${{h_r}^-}$ exceeds ${h_{r-2}^*}$ by at least $L+s \geq 2s+1$, we have 
$$
 \left(1 - q^{-1/2} \right)n_{{h_r}^-} \geq  q^{2s+1}  \left(1 - q^{-1/2} \right) n_{h_{r-2}^*} > \sqrt{q}n_{h_{r-2}^*} \left(1 - q^{-1/2} \right)^{-1},
$$
which by \eqref{omega_1} and \eqref{omega_2} shows that $\pm  \omega_{h_1} \pm \omega_{h_2} \pm \dots \pm \omega_{h_r} = 0$ is indeed impossible. 

Thus, using $1 + \frac{\lambda^2 L}{2} + \mathcal{O}(\lambda^3 L)= \exp \left( \frac{\lambda^2 L}{2} + \mathcal{O}(\lambda^3 L) \right)$, we obtain

\begin{align}
\int_0^1 \exp \left( \lambda \sum_{k=1}^N \sqrt{2}\cos(2 \pi n_k x) \right) \mathrm{d}x \leq & \prod_{i=1}^M \left( 1 + \frac{\lambda^2 L}{2}  + \mathcal{O} (\lambda^3 L)\right) \nonumber\\
= & \prod_{i=1}^M \exp \left( \frac{\lambda^2 L}{2} + \mathcal{O}(\lambda^3 L) \right) \nonumber\\
=& \exp \left( \frac{\lambda^2 N}{2}  + \mathcal{O} (\lambda^3 N)\right), \label{MGF_upper}
\end{align}
as claimed.\\

Now we come to the lower bound, which is very similar but a bit easier. Starting again at \eqref{hölder_this}, and using now that $e^x \geq 1 + x + x^2/2 + x^3/6$ and $e^x \geq 1+x$ (and noting that $1+x+x^2/2+x^3/6$ and $1+x$ are both non-negative for $|x|\leq 1$), we have
\begin{eqnarray*}
& & \int_0^1 \exp \left( \lambda \sum_{k=1}^N \sqrt{2}\cos(2 \pi n_k x) \right) \mathrm{d}x \\
& \geq & \int_0^1 \prod_{i=1}^M \left( \left(1 +  \lambda \sum_{k\in \Delta_i} \sqrt{2}\cos(2 \pi n_k x)+\frac{\lambda^2}{2} \left(\sum_{k\in \Delta_i}\sqrt{2}\cos(2 \pi n_k x)\right)^2 \right. \right. \\
& & \left.\left. \qquad  + \frac{\lambda^3}{6} \left(\sum_{k\in \Delta_i}\sqrt{2}\cos(2 \pi n_k x)\right)^3 \right) \cdot \left(1 +  \lambda \sum_{k\in \Delta_i'} \sqrt{2}\cos(2 \pi n_k x) \right) \right)\mathrm{d}x.
\end{eqnarray*}
Now we can carry out an analysis which is analogous to the one for the upper bound, but slightly easier since a) we do not have the fourth-order term coming from the polynomial approximation, and b) after multiplying out we can safely ignore all terms where $\lambda$  appears in second or fourth power, since after integration the contribution coming from these terms is always non-negative. In this way we can establish the lower bound 

$$
\int_0^1 \exp \left( \lambda \sum_{k=1}^N \sqrt{2}\cos(2 \pi n_k x) \right) \mathrm{d}x \geq  \exp \left( \frac{\lambda^2 N}{2}  + \mathcal{O} (\lambda^3 N)\right).
$$

Together with \eqref{MGF_upper}, we have obtained matching upper and lower bounds, so that we finally arrive at
$$
\int_0^1 \exp \left( \lambda \sum_{k=1}^N \sqrt{2}\cos(2 \pi n_k x) \right) \mathrm{d}x = \exp \left(\frac{\lambda^2  N}{2} + \mathcal{O}(\lambda^3 N) \right),
$$
as desired.
\end{proof}

\begin{proof}[Proof of Corollary \ref{cor:MDP}]
We apply \cite[Theorem 2.3.6]{DZLDPs} together with the subsequent remark. Let $t \in \mathbb{R}$ and $\lambda = \lambda_N$ be a null sequence with $\sqrt{N} \lambda \to \infty$. Then, by Theorem \ref{th1} it follows that
\begin{align*}
\lim_{N \rightarrow \infty} \lambda^2 \log \mathbb{E} \left[ e^{t \frac{\lambda}{\sqrt{N} \lambda^2} \sum_{k=1}^N \sqrt{2} \cos(2 \pi n_k x) }\right] & = \frac{t^2}{2}.
\end{align*}
The latter function is clearly essentially smooth and thus \cite[Theorem 2.3.6]{DZLDPs} implies the claim by recalling that $t \mapsto \frac{t^2}{2}$ is a fixed point of the Legendre transform.
\end{proof}

\bibliography{Lacunary_MGF}
\bibliographystyle{abbrv}

\end{document}